\newtheorem{dl}{Theorem}
\newtheorem{bd}[dl]{Lemma}
\newtheorem{nx}[dl]{Remark}
\newtheorem{md}[dl]{Proposition}
\newtheorem{hq}[dl]{Corollary}
\newtheorem*{dn}{Definition}
\newcommand{\tx}{\otimes }
\newcommand{\ts}{\oplus}
\newcommand{\Fa}{\breve{F}}
\newcommand{\Fn}{\widetilde{F}}
\newcommand{\A}{\mathcal{A}}
\newcommand{\B}{\mathcal{B}}
\newcommand{\I}{\mathcal{I}}
\newcommand{\R}{\mathcal{R}}
\newcommand{\Lh}{\mathcal{L}}
\newcommand{\Z}{\mathbb{Z}}
\begin{document}
\title{RING EXTENSION PROBLEM, SHUKLA COHOMOLOGY AND ANN-CATEGORY THEORY}
\author{Nguyen Tien Quang and Nguyen Thu Thuy}
\pagestyle{myheadings} 
\markboth{Ring extension problem, Shukla cohomology and Ann-category theory}{Nguyen Tien Quang and Nguyen Thu Thuy}
\maketitle
\setcounter{tocdepth}{1}
{\bf Abstract.} Every ring extension of $A$ by $R$ induces a pair of group homomorphisms $\mathcal{L}^{*}:R\to End_\Z(A)/L(A);\mathcal{R}^{*}:R\to End_\Z(A)/R(A),$ preserving multiplication, satisfying some certain conditions. A such $4$-tuple $(R,A,\mathcal{L}^{*},\mathcal{R}^{*})$ is called a ring pre-extension. Each ring pre-extension induces a $R$-bimodule structure on bicenter $K_A$ of ring $A,$ and induces an obstruction $k,$ which is a $3$-cocycle of $\Z$-algebra $R,$ with coefficients in $R$-bimodule $K_A$ in the sense of Shukla. Each obstruction $k$ in this sense induces a structure of a regular Ann-category of type $(R,K_A).$ This result gives us the first application of Ann-category in extension problems of algebraic structures, as well as in cohomology theories.
\section{Introduction}
Group extension problem has been presented with group of automorphisms $Aut(G)$ and quotient group $Aut(G)/In(G)$ by group of inner automorphisms. For ring extension problem, Mac Lane [1] has replaced the above groups by the ring of bimultiplications $M_A$ and the quotient ring $P_A$ of $M_A$ upon ring of inner bimultiplications. Besides, Maclane has replaced commutative, associative laws for addition in ring $R$ by commutative-associative law $(u+ v)+(r+s)=(u+r)+(v+s)$ and therefore proved that each obstruction of ring extension problem is an element of $3$-dimensional cohomology group in the sense of Maclane, and the number of solutions corresponds to $2$-dimensional cohomology group of ring under a bijection.\\
The idea of solving group extension problem by groups $Aut(G)$ and $Aut(G)/In(G)$ can be applied to ring extension theory in a different way. In this way, we use separately associative, commutative laws to construct obstruction of ring extension problem and therefore give the solution to ring extension problem in the terms of Shukla cohomology [5] of $\Z$-algebras. This cohomology, in our opinion, is more convenient than cubical resolution in Maclane[1].\\
$\Z$-split ring extension problem relates close to Hochschild cohomology and is regarded as an application of Ann-category theory. In this paper, we establish the relationship between ring extension problem in the general case and Ann-category theory. In this way, we may use Ann-category as a united terms to interpret cohomology of different algebraic systems.
\section{Cohomology of an associative algebra}
Shukla cohomology (see [5]) of ring $R$ (regarded as a $\Z$-algebra) with coefficients in $R$-bimodule $M$ is the group $$H^*_S(R,M)=H^*(\sum_{n\geq 0} Hom_{\Z}(U^n,M)),$$ 
where $U$ is a graded differential $\Z$-algebra as well as a free resolusion of $R$ on $\Z.$

To facilitate the calculation, in [2], N.T.Quang has built nomalized complex $B(U) = \sum U\tx(U/\Z)^n\tx U,$ in which $(U/\Z)^0=\Z, U/\Z=U/{I\Z},$ where $I:\Z\longrightarrow U$ is a canonical homomorphism. Each generator of $B(U)$ has the form
$$x=u_0[u_1|...u_n]u_{n+1}, n\geq 0$$
This element is equal to $0$ if there exists one in $u_i (i = 1,...,n)$ belonging to $I\Z.$ Then, the normalized complex $B(U)$ is a graded differential bimodule on $DG$-algebra $U$ with grading is defined by
$$deg(u_0[u_1|...|u_n]u_{n+1})=n+degu_0 + ... +degu_{n+1}$$
and with differential $\partial=\partial_r+\partial_s$ is defined by
$$\partial_r(u_0[u_1|...|u_n]u_{n+1})=du_0[u_1|...|u_n]u_{n+1}$$
$$-\sum_{i=1}^n(-1)^{e_{i-1}}u_0[u_1|...|du_i|...|u_n]u_{n+1}+(-1)^{e_n}u_0[u_1|...|u_n](du_{n+1})$$
$$\partial_s(u_0[u_1|...| u_n]u_{n+1})=(-1)^{e_0}u_0u_1[u_2|...|u_n]u_{n+1}$$
$$+\sum_{i=1}^{n-1}(-1)^{e_i}u_0[u_1|...|u_iu_{i+1}|...|u_n]u_{n+1}+(-1)^{e_n}u_0[u_1|...|u_{n-1}]u_nu_{n+1}$$
where $e_0=0,\ e_i=i+degu_0+\cdots +degu_i.$
\section{Classical problem: Singular ring extension}
A \emph{ring extension} is a ring epimorphism $\sigma:S \longrightarrow R$ which carries the identity of $S$ to the identity of $R$. Then, $A=Ker\sigma$ is a two-sided ideal in $S$ and therefore, we have the short exact sequence of rings and ring homomorphisms
\[
\begin{diagram}\node{E:\quad 0} \arrow{e,t}{}
\node{A} \arrow{e,t}{\chi}
\node{S}\arrow{e,t}{\sigma} 
\node{R} \arrow{e,t}{} 
\node{0\quad}\node{,\quad\sigma(1_S)=1_R.}
\end{diagram}
\]
Extension $E$ is called \emph{singular} if $A$ is a ring with \emph{null multiplication}, i.e., $A^2 = 0$. Then, $A$ becomes a $R$-bimodule with operators
$$xa = u(x)a; ax = au(x), a\in A, x\in R.$$
where $u(x)$ is a representative of $R$ in $S$ in the sense $\sigma u(x) = x$ (note that we always choose $u(0)=0, u(1_R)=1_S$).

Let $E$ be a given singular extension of $A$ by $R$ and $u(x)$ be one of its representatives. Then, addition and multiplication in $S$ induce two factor sets $f,g$ determined by
\begin{equation}
\begin{split}
\begin{aligned}
u(x)+u(y)&= f(x,y)+u(x+y)\\
u(x).u(y)&= g(x,y)+u(xy)
\end{aligned}
\end{split}
\end{equation}
where $f, g: R^2 \longrightarrow A$\\
Since $u(0)=0,u(1_R)=1_S,$ $f$ and $g$ satisfy normalization condition in the sense
\begin{equation}
\begin{split}
\begin{aligned}
f(x,0)&= f(0,y)=0\\
g(x,0)&= g(0,y)=g(1,y)=g(y,1)=0
\end{aligned}
\end{split}
\end{equation}
From commutative, associative laws for addition in $S,$ respectively, we have 
\begin{equation}
\begin{split}
f(x,y)=f(y,x)
\end{split}
\end{equation}
\begin{equation}
\begin{split}
f(y,z)-f(x+y,z)+f(x,y+z)-f(x,y)=0
\end{split}
\end{equation}
From associative law for multiplication in $S,$ we have
\begin{equation}
\begin{split}
xg(y,z)-g(xy,z)+g(x,yz)-g(x,y)z=0
\end{split}
\end{equation}
From left and right distributive laws for addition and multiplication, respectively, we have 
\begin{equation}
\begin{split}
\begin{aligned}
xf(y,z)-f(xy,xz)&=g(x,y)+g(x,z)-g(x,y+z)\\
(f(x,y))z-f(xz,yz)&=g(x,z)+g(y,z)-g(x+y,z)
\end{aligned}
\end{split}
\end{equation}
The pair $(f, g)$ satisfies relations (2)-(6) is called \emph{a factor set} of extension $E$.
\begin{md}
A factor set of a singular extension of $A$ by $R$ is a $2$-cocycle of ring $R$ with coefficients in $R$-module $A$ in the sense of Mac Lane-Shukla.
\end{md}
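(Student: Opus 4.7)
The strategy is to exhibit the pair $(f,g)$ as a single normalized $2$-cochain on the complex $B(U)$ of Section 2 with coefficients in the $R$-bimodule $A$, and to identify the four relations (3)--(6) with the vanishing of its coboundary under $\partial=\partial_r+\partial_s$.

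First I would recall the structure of the $DG$-$\Z$-algebra resolution $U\to R$. In degree $0$, $U_0$ is (a quotient of) the free $\Z$-algebra on the underlying set of $R$, generated by symbols $[x]$ for $x\in R$; in degree $1$ one adjoins generators $[x,y]$ with $d[x,y]=[x]+[y]-[x+y]$ so as to kill the additive relations of $R$; higher-degree generators are added to obtain acyclicity. A normalized $2$-cochain with values in $A$ is then determined by its values on the two basic types of generators of total degree $2$ in $B(U)$: the purely multiplicative bars $[u_1|u_2]$ with $n=2$ and all $u_i\in U_0$, and the mixed bars of length $n=1$ containing one degree-one entry $[x,y]$. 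I would set $c([u_1|u_2]):=g(x,y)$ on $[[x]|[y]]$ and $c([[x,y]]):=f(x,y)$ on the mixed generators, with signs dictated by the grading $\deg(u_0[u_1|\dots|u_n]u_{n+1})=n+\sum\deg u_i$ given in Section 2. The normalization condition (2) exactly says $c$ vanishes on any bar containing $0$ or $1$, so $c$ lies in the normalized subcomplex.

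Next I would compute $\partial^{*}c$ on the basic degree-$3$ generators. On a purely multiplicative bar $[u_1|u_2|u_3]$ only $\partial_s$ contributes, together with the bimodule action from the outer factors, and the resulting identity is precisely (5). On a mixed bar formed from $[x,y]$ together with one further multiplication by $[z]$, the piece $\partial_r$ produces the additive terms $xf(y,z)$ and $f(xy,xz)$ (resp.\ $f(x,y)z$ and $f(xz,yz)$) via $d[x,y]=[x]+[y]-[x+y]$, while $\partial_s$ produces the three $g$-terms; balancing these yields exactly the two branches of (6) corresponding to left and right distributivity. On the generator involving the degree-$2$ element encoding additive associativity of $R$, $\partial_r$ gives (4); on the generator encoding additive commutativity it gives (3). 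Thus $\partial^{*}c=0$ is equivalent to the simultaneous validity of (3)--(6).

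The main obstacle is bookkeeping: carrying the signs $e_i=i+\deg u_0+\cdots+\deg u_i$ through each computation, and ensuring that the two mixed generators $[x,y]\cdot[z]$ and $[x]\cdot[y,z]$ really produce the two separate distributivity identities of (6) with the correct $R$-bimodule action on $A$, without spurious contributions from other simplicial faces. Once this correspondence is set up, together with the matching of (3)--(4) to the additive part of $\partial_r$ and of (5) to the multiplicative part of $\partial_s$, the proposition follows by direct term-by-term identification.
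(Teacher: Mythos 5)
Your proposal is correct and follows essentially the same route as the paper, whose proof consists precisely of the remark that the statement follows from computing the group of $2$-cocycles of $R$ with coefficients in $A$; your identification of $(f,g)$ with a normalized $2$-cochain on $B(U)$ and of relations (3)--(6) with the components of $\partial^{*}c=0$ on the degree-$3$ generators is exactly that computation, spelled out in more detail than the paper gives.
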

\begin{proof}
This result is obtained from calculation of group of $2$-cocycles of ring $R$ with coefficients in $R$-bimodule $A.$
\end{proof}
Calculation of group $H^2(R,A)$ based on definitions of Maclane as well as of Shukla coincide with each other. Differences in representation as well as complexity in calculation of these two cohomologies occur when the dimension is down to $3.$

If we choose factor set $u'(x)$ instead of $u(x)$ such that $u'(0)=0,u'(1)=1,$ we will have $\sigma(u'(x)-u(x))=0,$ so $u'(x)=u(x)+t(x),$ where $t(x)\in A.$ Then we have
\begin{equation}
\begin{split}
\begin{aligned}
f'(x,y)&= f(x,y)-t(x+y)+t(x)+t(y)=f(x,y)+(\delta_1t)(x,y)\\
g'(x,y)&= g(x,y)+xt(y)-t(xy)+t(x)y=g(x,y)+(\delta_2t)(x,y)
\end{aligned}
\end{split}
\end{equation}
This shows that each singular ring extension $E$ responds to a factor set $(f', g')$ which is equal to $(f, g)$ up to a $2$-coboundary and therefore each singular extension $E$ of $A$ by $R$ responds to an element of cohomology group $H^2(R,A)$ in the sense of Mac Lane - Shukla.

Conversely, if we have $R$-bimodule $A$ together with functions $f,g$ satisfying relations (2)-(6), we can construct extension $E$ of $A$ by $R,$ where ring $S$ is determined by
\begin{eqnarray}
S&=&\{(a,x)\mid a\in A,x\in R\}\nonumber
\end{eqnarray}
together with two operations 
\begin{equation}
\begin{split}
\begin{aligned}
(a,x)+(b,y)&=&(a+b+f(x,y),x+y)\\
(a,x)(b,y)&=&(ay+xb+g(x,y),xy)
\end{aligned}
\end{split}
\end{equation}
\section{Pre-extensions of rings}
We now consider the general case, i.e., ring extension
\[
\begin{diagram}\node{E:\quad 0} \arrow{e,t}{}
\node{A} \arrow{e,t}{\chi}
\node{S}\arrow{e,t}{\sigma} 
\node{R} \arrow{e,t}{} 
\node{0\quad}\node{,\quad\sigma(1_S)=1_R.}
\end{diagram}
\]
which is unnecessarily singular. Then $A$ is a two-sided ideal of ring $S$ and therefore $A$ is regarded as a $S$-bimodule and $\chi$ is a $S$-bimodule homomorphism.

To determine necessary conditions for ring estension problem, we will expand the technology which is used for group extension problem. This way is a bit different from the one of Maclane in [1]. In this way, we establish directly the relationship between ring extension problem, Shukla cohomology and Ann-category theory.

We consider the following two sets of mappings instead of the set of bimultiplications of ring $A$ which Maclane as well as Shukla have done
\begin{eqnarray}
L(A)&=&\{l_a: A\longrightarrow A,a\in A\mid l_a(b)=ab,\forall b\in A\}\nonumber\\
R(A)&=&\{r_a:A\longrightarrow A,a\in A\mid r_a(b)=ba,\forall b\in A\}\nonumber
\end{eqnarray}
Clearly, each $l_a$ (resp. $r_a$) is a right (resp. left) $A$-module endomorphism of $A$ and $L(A), R(A)$ are two subrings of ring $End_{\Z}(A)$ of endomorphisms of group $A.$\\
Consider ring homomorphism
\begin{eqnarray}
\mathcal{L}:\quad S&\rightarrow&End_{\Z}(A)\nonumber\\
s&\mapsto&\mathcal{L}_s,\mathcal{L}_s(a)=sa,\forall a\in A.\nonumber
\end{eqnarray}
Clearly, $\mathcal{L}_s$ is a right $A$-module endomorphism of $A,$ and $\mathcal{L}$ induces group homomorphism $\mathcal{L}^{*}:R\to End_{\Z}(A)/L(A)$ such that following diagram
\[
\begin{diagram}
\node{0} \arrow[1]{e,t}{}
\node{A} \arrow[1]{e,t}{\chi} \arrow[1]{s,r}{l} 
\node{S} \arrow{e,t}{\sigma}\arrow{s,r}{\mathcal{L}}
\node{R}  \arrow{e,t}{}\arrow{s,r}{\mathcal{L}^{*}}\node{0}\\
\node{0} \arrow{e,t}{}
\node{L(A)} \arrow{e,t}{i} 
\node{End_{\Z}(A)} \arrow{e,t}{p}
\node{End_{\Z}(A)/L(A)} \arrow{e,t}{}
\node{0}
\end{diagram}
\]
commute.\\
Similarly, we have ring homomorphism
\begin{eqnarray}
\mathcal{R}:\quad S&\rightarrow&End_{\Z}(A)\nonumber\\
s&\mapsto&\mathcal{R}_s,\mathcal{R}_s(a)=as,\forall a\in A.\nonumber
\end{eqnarray}
$\mathcal{R}_s$ is a left $A$-module endomorphism of $A,$ and $\mathcal{R}$ induces group homomorphism $\mathcal{R}^{*}:R\to End_{\Z}(A)/R(A)$ such that following diagram
\[
\begin{diagram}
\node{0} \arrow[1]{e,t}{}
\node{A} \arrow[1]{e,t}{\chi} \arrow[1]{s,r}{r} 
\node{S} \arrow{e,t}{\sigma}\arrow{s,r}{\mathcal{R}}
\node{R}  \arrow{e,t}{}\arrow{s,r}{\mathcal{R}^{*}}\node{0}\\
\node{0} \arrow{e,t}{}
\node{R(A)} \arrow{e,t}{i} 
\node{End_{\Z}(A)} \arrow{e,t}{p}
\node{End_{\Z}(A)/R(A)} \arrow{e,t}{}
\node{0}
\end{diagram}
\]
commute.\\
Moreover, $\mathcal{L}_s$ and $\mathcal{R}_s$ satisfy relations
\begin{equation}
\begin{split}
l_a\circ \mathcal{L}_s=l_{\mathcal{R}_s(a)}\qquad,\mathcal{L}_s\circ l_a=l_{\mathcal{L}_s(a)}\\
\mathcal{R}_s\circ r_a=r_{\mathcal{R}_s(a)}\qquad,r_a\circ
\mathcal{R}_s=r_{\mathcal{L}_s(a)}
\end{split}
\end{equation}
$$\mathcal{L}_s\circ\mathcal{R}_{s'}=\mathcal{R}_{s'}\circ\mathcal{L}_s\nonumber$$
forall $a\in A$ and $s,s'\in S.$\\
From relations (9), we can see that $L(A)$ and $R(A)$ are, respectively, two-sided ideals in $\mathcal{L}(S)$ and $\mathcal{R}(S)$. Therefore, $\mathcal{L}^{*}(R)$ and $\mathcal{R}^{*}(R)$ are rings, $\mathcal{L}^{*}$ and $\mathcal{R}^{*}$ are ring homomorphisms from $R$ to its images.

Let $A$ be a ring (without identity) and $R$ be a ring with identity $1\ne 0.$ Assume that we have group homomorphisms
$$\mathcal{L}^{*}:R\to End_{\Z}(A)/L(A);\mathcal{R}^{*}:R\to End_{\Z}(A)/R(A)$$
such that for each $x\in R,$ there exists a pair $\varphi_x\in \mathcal{L}^{*}(x),\psi_x\in\mathcal{R}^{*}(x)$ satisfying relations
\begin{equation}
\begin{split}
\phi_1=\psi_1=id\qquad\qquad\qquad\qquad\\
\begin{matrix}
 l_a\circ \varphi_x&=&l_{\psi_x(a)}\qquad,\qquad
\varphi_x\circ l_a&=&
l_{\varphi_x(a)}\\
\psi_x\circ r_a&=&r_{\psi_x(a)}\qquad,\qquad r_a\circ \psi_x&=&
r_{\varphi_x(a)}
\end{matrix}\\
\varphi_x\circ\psi_y=\psi_y\circ\varphi_x\qquad\qquad\qquad
\end{split}
\end{equation}
Moreover, $\mathcal{L}^{*}$ and $\mathcal{R}^{*}$ preserve multiplication. Then, we call $4$-tuple $(R,A,\mathcal{L}^{*},\mathcal{R}^{*})$ \emph{a pre-extension} of $A$ by $R$ inducing $\mathcal{L}^{*},\mathcal{R}^{*}.$\\
Ring extension problem is to find whether pre-extension $(R,A,\mathcal{L}^{*},\mathcal{R}^{*})$ has extension and, if so, how many extensions of $A$ by $R$ are.
\section{The obstruction of a ring pre-extension}
We now present the concept of \emph{obstruction} of pre-extension $(R,A,\mathcal{L}^{*},\mathcal{R}^{*}).$\\
Since $\mathcal{L}^{*}(x).\mathcal{L}^{*}(y)=\mathcal{L}^{*}(xy),$ we have 
$$\varphi_x.\varphi_y=\varphi_{xy}+l_{g(x,y)}\qquad,\qquad g(x,y)\in A$$
Then, from associative law for multiplication in $End_{\Z}(A),$ we have
$$l_{\varphi_x[g(y,z)]}+l_{g(xy,z)}=l_{g(xy,z)}+l_{\psi_z[g(x,y)]}$$
which yeilds
\begin{equation}
\begin{split}
\varphi_x[g(y,z)]-g(xy,z)+g(x,yz)-\psi_z[g(x,y)]=\alpha(x,y,z),
\end{split}
\end{equation}
in which $\alpha(x,y,z)\in K_A,$ where $K_A$ is a two-sided ideal of $A.$
$$K_A=\{c\in A\mid ca=0=ac \ \ \forall a\in A\}.$$
We call $K_A$ \emph{bicenter} of $A.$
Associative, commutative laws for addition in $End_{\Z}(A),$ respectively, give us
\begin{equation}
\begin{split}
\begin{matrix}
f(y,z)-f(x+y,z)+f(x,y+z)-f(x,y)&=&\xi(x,y,z)\\
f(x,y)-f(y,x)&=&\eta(x,y)
\end{matrix}
\end{split}
\end{equation}
where $\xi:R^3\to K_A,\ \eta:R^2\to K_A.$\\
Finally, from distributive law in $End_{\Z}(A),$ we have
\begin{equation}
\begin{split}
\varphi_x[f(y,z)]-f(xy,xz)+g(x,y+z)-g(x,y)-g(x,z)=\lambda(x,y,z)\\
\psi_z[f(x,y)]-f(xz,yz)+g(x+y,z)-g(x,z)-g(y,z)=\rho(x,y,z)
\end{split}
\end{equation}
where $\lambda,\rho:R^3\to K_A.$\\
We call the family $(\xi,\eta,\alpha,\lambda,\rho)$ which is so determined \emph{an obstruction} of pre-extension $(R,A,\mathcal{L}^{*},\mathcal{R}^{*}).$\\
Clearly, $K_A$ is a $R$-bimodule with operations $xa=\varphi_x(a),ax=\psi_x(a),$ which are independent of the choice of $\varphi_x,\psi_x.$
\begin{nx}
We may note that if commutative-associative law
$$(u+v)+(r+s)=(u+r)+(v+s)$$
for addition in $End_{\Z}(A)$ is used, we will have a function $\gamma:R^4\to K_A,$ given by
$$\gamma(x,y,z,t)=f(x+y,z+t)-f(x,z)-f(y,t)-f(x+z,y+t)+f(x,y)+f(z,t)\quad(12')$$
Then, an obstruction of pre-extension $(R,A,\mathcal{L}^{*},\mathcal{R}^{*})$ is a family of 4 functions $(\alpha,\gamma,\lambda,\rho)$ satisfying relations (11), (12') and (13). We can see that two concepts of above-mentioned obstruction are equivalent. Due to the theory of cohomology of Maclane, each obstruction $(\alpha,\gamma,\lambda,\rho)$ is a $3$-cocycle in $Z^3_M(R,K_A).$
\end{nx}
One of the main results in this paper is showing that each obstruction regarded as a family of 5 functions $(\xi,\eta,\alpha,\lambda,\rho)$ is a $3$-cocycle in the sense of Shukla.

First, we may prove two following lemmas for obstructions.

\begin{bd}
If we fix $\varphi_x\in\mathcal{L}^{*}(x), \psi_x\in\mathcal{R}^{*}(x),$ and replace functions $g,f$ by functions $g',f'$ such that 
$$l_{g(x,y)}=l_{g'(x,y)}\ \ \textrm{and}\ \ l_{f(x,y)}=l_{f'(x,y)}$$
then $g'=g+\nu$ and $f'=f+\mu,$ where $\nu,\mu:R^2\to K_A.$
Then,
$$\xi'=\xi+\partial_1\mu,\qquad\eta'=\eta+ant\mu,\qquad\alpha'=\alpha+\partial_2\nu$$
$$(\partial_1\mu)(x,y,z)=\mu(y,z)-\mu(x+y,z)+\mu(x,y+z)-\mu(x,y)$$
\begin{equation}
\begin{split}
(\partial_2\nu)(x,y,z)=x\nu(y,z)-\nu(xy,z)+\nu(x,yz)-\nu(x,y)z
\end{split}
\end{equation}
$$\lambda'(x,y,z)=\lambda(x,y,z)+\nu(x,y+z)-\nu(x,y)-\nu(x,z)+x\mu(y,z)-\mu(xy,xz)$$
$$\rho'(x,y,z)=\rho(x,y,z)+\nu(x+y,z)-\nu(x,z)-\nu(y,z)+\mu(x,y)z-\mu(xz,yz)$$
Moreover, two functions $\nu,\mu$ can be chosen arbitrarily.
\end{bd}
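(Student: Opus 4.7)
The plan is to proceed in three steps, handling the structural question (why $\nu,\mu$ land in the bicenter) before the bookkeeping.

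First I would verify that $\nu = g'-g$ and $\mu = f'-f$ take values in $K_A$. The functions $g$ and $f$ enter the theory symmetrically from both sides: because $\mathcal{R}^{*}$ preserves multiplication and is additive, the same $g$ and $f$ fit into right-handed defining equations of the form $\psi_y\circ\psi_x = \psi_{xy} + r_{g(x,y)}$ and $\psi_x+\psi_y = \psi_{x+y}+r_{f(x,y)}$. Replacing $g$ by $g'$ and $f$ by $f'$ must preserve these right-handed equations as well, and together with the stated hypotheses $l_{g}=l_{g'}$, $l_{f}=l_{f'}$ this forces $r_{\nu}=r_{\mu}=0$ in addition to $l_{\nu}=l_{\mu}=0$, so $\nu,\mu:R^2\to K_A$.

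Second, because $\nu,\mu$ are $K_A$-valued, the expressions $\varphi_x[\nu(y,z)]$ and $\psi_z[\nu(x,y)]$ coincide with the well-defined $R$-bimodule actions $x\,\nu(y,z)$ and $\nu(x,y)\,z$ on $K_A$, independently of the choice of representative; the analogous remark applies to $\mu$. I would then substitute $g'=g+\nu$ and $f'=f+\mu$ directly into the five defining relations (11), (12), (13). Each obstruction function is a $\Z$-linear combination of evaluations of $g$ or $f$ pre- or post-composed with $\varphi$ or $\psi$, so the substitution splits cleanly into the original obstruction plus an expression purely in $\nu,\mu$. Collecting these extra terms reproduces $\partial_1\mu$ for $\xi'$, the antisymmetrisation $\mathrm{ant}\,\mu$ for $\eta'$, $\partial_2\nu$ for $\alpha'$, and the mixed $\nu,\mu$-formulas for $\lambda'$ and $\rho'$ stated in the lemma.

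Finally, to see that $\nu,\mu$ may be chosen freely, I would reverse the construction: start with any pair $\nu,\mu:R^2\to K_A$ and set $g' = g+\nu$, $f' = f+\mu$. Since elements of $K_A$ annihilate $A$ on both sides, $l_{\nu}=r_{\nu}=0$ and $l_{\mu}=r_{\mu}=0$, so $l_{g'}=l_{g}$, $l_{f'}=l_{f}$ (and the parallel right-handed identities are likewise preserved); hence $g',f'$ are admissible replacements for the fixed $\varphi,\psi$. The only delicate point in the whole argument is the first step: pinning the differences down to the bicenter rather than only to the left annihilator, which requires invoking the right-handed defining equations supplied by $\mathcal{R}^{*}$. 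Everything after that is routine substitution.
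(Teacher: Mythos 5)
The paper states this lemma without proof, so the comparison is with the argument it evidently intends: substitute $g'=g+\nu$, $f'=f+\mu$ into relations (11)--(13) and read off (14). Your second and third steps do exactly that and are correct: the formulas for $\xi',\eta',\alpha',\lambda',\rho'$ come out as stated, and conversely any $K_A$-valued $\nu,\mu$ give admissible $g',f'$ because elements of $K_A$ annihilate $A$ on both sides. This is the intended (routine) content of the lemma.

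The gap is in your first step. You claim that, because $\mathcal{R}^{*}$ preserves multiplication and addition, the \emph{same} functions $g,f$ satisfy the right-handed equations $\psi_y\circ\psi_x=\psi_{xy}+r_{g(x,y)}$ and $\psi_x+\psi_y=\psi_{x+y}+r_{f(x,y)}$. That is not a consequence of the pre-extension axioms. Preservation of multiplication by $\mathcal{R}^{*}$ only yields \emph{some} $h(x,y)\in A$ with $\psi_y\circ\psi_x=\psi_{xy}+r_{h(x,y)}$, whereas $g$ is defined solely by $\varphi_x\varphi_y=\varphi_{xy}+l_{g(x,y)}$. Using (10) one gets, for all $a,b\in A$, $a\,\varphi_x(\varphi_y(b))=\psi_y(\psi_x(a))\,b$, and comparing the two decompositions gives only $a\,(g(x,y)-h(x,y))\,b=0$: the difference lies in the middle annihilator $\{c\in A: AcA=0\}$, which neither forces $h=g$ nor guarantees that a common representative can be chosen (the cosets $g+\{c: cA=0\}$ and $h+\{c: Ac=0\}$ need not intersect). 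This matters because the lemma's literal hypotheses $l_{g'}=l_{g}$, $l_{f'}=l_{f}$ only force $\nu,\mu$ into the left annihilator of $A$; to land in $K_A$ one must additionally \emph{postulate} that $g,f$ and $g',f'$ are compatible with the $\psi_x$ in the Mac Lane bimultiplication sense (true for data coming from an actual extension, and implicitly needed already for the paper's assertion that $\alpha,\lambda,\rho$ take values in $K_A$), rather than derive it as you do. With that hypothesis made explicit your forcing of $r_\nu=r_\mu=0$ and the remainder of your argument go through; note also that the direction established in your third step --- arbitrary $K_A$-valued $\nu,\mu$ are admissible and alter the obstruction exactly by (14) --- is the part actually used in the sequel.
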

\begin{bd}
If we replace functions $\varphi_x,\psi_x$ by functions $\varphi'_x,\psi'_x,$ we will be able to choose functions $g',f'$ such that family $(\xi,\eta,\alpha,\lambda,\rho)$ is unchanged.
\end{bd}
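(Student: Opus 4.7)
The approach is to reduce the statement to Lemma 2 by explicitly computing how the natural factor sets change when the representatives are changed, and then invoking the freedom granted by that lemma to correct the resulting obstruction back to the original. Since $\varphi'_x$ and $\varphi_x$ lie in the same coset $\mathcal{L}^{*}(x)\subset End_{\Z}(A)/L(A)$, there is a function $p:R\to A$ with $\varphi'_x=\varphi_x+l_{p(x)}$; similarly $\psi'_x=\psi_x+r_{q(x)}$ for a function $q:R\to A$, and one may arrange $p(1)=q(1)=0$. The relations (10) for the primed representatives then impose compatibility conditions on $p$ and $q$ modulo left/right annihilators, which I would invoke implicitly in the computation below.

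Next I would define the \emph{natural} primed factor sets by $\varphi'_x+\varphi'_y=\varphi'_{x+y}+l_{f'(x,y)}$ and $\varphi'_x\circ\varphi'_y=\varphi'_{xy}+l_{g'(x,y)}$. A direct expansion, using (10) in the form $\varphi_x\circ l_a=l_{\varphi_x(a)}$, $l_a\circ\varphi_x=l_{\psi_x(a)}$, and $l_a l_b=l_{ab}$, yields
\[f'(x,y)=f(x,y)+p(x)+p(y)-p(x+y),\]
\[g'(x,y)=g(x,y)+\varphi_x(p(y))+\psi_y(p(x))+p(x)p(y)-p(xy).\]

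Substituting into the defining identities (11)--(13) and comparing with the unprimed obstruction, the additive pieces $\xi'$ and $\eta'$ coincide with $\xi,\eta$ on the nose: the $p$-correction in $f'$ is an ordinary group $1$-coboundary, which is annihilated both by symmetrization and by the additive cocycle operator. For $\alpha'$, $\lambda'$, and $\rho'$, the $p$- and $q$-cross-terms rearrange, using $\varphi_x\circ\psi_y=\psi_y\circ\varphi_x$ together with (10), into expressions whose components lie in $K_A$ and have exactly the shape of the correction terms of Lemma 2 for some $\mu,\nu:R^2\to K_A$ canonically determined by $p,q$.

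The main obstacle is the bookkeeping needed for this last step: one must verify that the $K_A$-valued discrepancies picked up by $\alpha'$, $\lambda'$, and $\rho'$ admit a \emph{single} coherent pair $(\mu,\nu)$ whose substitution in the formulas of Lemma 2 reproduces them simultaneously. Once this compatibility is established, invoking Lemma 2 with $f''=f'+\mu$ and $g''=g'+\nu$ yields new factor sets for the new representatives with obstruction equal to $(\xi,\eta,\alpha,\lambda,\rho)$, completing the proof.
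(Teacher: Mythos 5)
The paper states this lemma without proof, so your attempt can only be judged on its own merits; and on its own merits it is a plan rather than a proof. Your opening moves are right and match the standard approach: write $\varphi'_x=\varphi_x+l_{p(x)}$, $\psi'_x=\psi_x+r_{q(x)}$, take the natural primed factor sets, and indeed your formulas $f'(x,y)=f(x,y)+p(x)+p(y)-p(x+y)$ and $g'(x,y)=g(x,y)+\varphi_x(p(y))+\psi_y(p(x))+p(x)p(y)-p(xy)$ are exactly what the relations (10) give, and the observation that the $\delta p$-correction cancels identically in $\xi$ and $\eta$ is correct. But the entire content of the lemma is the behaviour of $\alpha,\lambda,\rho$, and there you stop: you assert that the cross-terms ``rearrange into expressions whose components lie in $K_A$ and have exactly the shape of the correction terms'' of the previous lemma, call the verification ``the main obstacle,'' and conclude conditionally (``once this compatibility is established''). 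That missing verification is not bookkeeping one can defer; it is the theorem.

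Moreover, the points you propose to ``invoke implicitly'' are precisely where the difficulty sits. First, $p$ and $q$ are a priori unrelated: relations (10) for the primed pair only force expressions such as $a\,(p(x)-q(x))\,b=0$, i.e.\ control modulo one-sided annihilators, not equality and not membership in $K_A$; every term in $\alpha'-\alpha$, $\lambda'-\lambda$, $\rho'-\rho$ in which $\psi'$ acts (e.g.\ $-g'(x,y)q(z)$ versus the $p$-terms produced by $g'$) picks up such $p-q$ discrepancies. Second, your $g'$ (and $f'$) are built only from the $\varphi$-side relation $\varphi'_x\varphi'_y=\varphi'_{xy}+l_{g'(x,y)}$; carrying out the expansion of $\alpha'-\alpha$ with relations (10), most terms cancel, but one is left with contributions of the form $p(x)\bigl(g(y,z)-h(y,z)\bigr)$, where $h$ is the factor set measuring $\psi_z\circ\psi_y-\psi_{yz}$. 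These vanish only if one uses the compatibility that the same element serves as factor set for both the left and the right operators (the bimultiplication-style convention implicit in the paper's definition of a pre-extension), and that choice must also be respected when passing to $f',g'$ for the new representatives. Without establishing (i) a coherent simultaneous choice of $f',g'$ adapted to both $\varphi'$ and $\psi'$, and (ii) that the residual discrepancies are genuinely $K_A$-valued and of the coboundary shape (14) for a single pair $(\mu,\nu)$ — or, better, that with the right choice they vanish outright — the reduction to the previous lemma is not yet a proof.
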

\indent Two obstructions $(\xi,\eta,\alpha,\lambda,\rho)$ and $(\xi',\eta',\alpha',\lambda',\rho')$ of the same pre-extension $(R,A,\mathcal{L}^{*},\mathcal{R}^{*})$ are \emph{cohomologous} if they satisfy relations (14) where $\nu,\mu$ are certain functions.

Two above lemmas show that each pre-extension $(R,A,\mathcal{L}^{*},\mathcal{R}^{*})$ determines uniquely the cohomology class of any one of its obstructions.
We now show the solution of ring extension problem by Shukla cohomology of ring (regarded as $\Z$-algebra)
\begin{dl}
The cohomology class of obstruction $(\xi,\eta,\alpha,\lambda,\rho)$ of pre-extension $(R,A,\mathcal{L}^{*},\mathcal{R}^{*})$ is an element $\overline{k}\in H^3_{Sh}(R,K_A).$ Pre-extension $(R,A,\mathcal{L}^{*},\mathcal{R}^{*})$ has extension iff $\overline{k}=0.$ Then, there exists a bijection between the set $Ext(R,A)$ of equivalence classes of extension and the set $H^3_{Sh}(R,K_A).$
\end{dl}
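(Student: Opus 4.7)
The plan is to prove three assertions in sequence. First, that the family $(\xi,\eta,\alpha,\lambda,\rho)$ assembles into a Shukla $3$-cocycle of the $\Z$-algebra $R$ with coefficients in $K_A$. Second, that the pre-extension admits an extension iff this cohomology class vanishes. Third, that in that case the equivalence classes of extensions are in bijection with the stated cohomology group.

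For the first step, I would fix a free $\Z$-algebra resolution $U\to R$ and work inside the normalised complex $B(U)=\sum U\tx(U/\Z)^n\tx U$ recalled in Section~2. The five component functions line up naturally with the two pieces of the total differential $\partial=\partial_r+\partial_s$: the pair $(\xi,\eta)$ records the failure of additive associativity and commutativity for $f$ and so pairs against $\partial_r$ (the part coming from the $du_i$), while $(\alpha,\lambda,\rho)$ records the failures of multiplicative associativity and of left/right distributivity and so pairs against $\partial_s$ (the part coming from the $u_iu_{i+1}$). I would then bundle them into a single cochain $k\in Hom_\Z(B(U)_3,K_A)$ and verify $\partial k = 0$ by expanding, for each shape of generator of $B(U)_4$, a quadruple composition in $End_\Z(A)$ or a quadruple sum in two different ways and reading off the resulting identity; all outputs land in $K_A$ by the same argument that placed $\alpha$, $\xi$, $\eta$, $\lambda$, $\rho$ there in (11)--(13). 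Combined with the two preceding lemmas, this yields a well-defined class $\overline{k}\in H^3_{Sh}(R,K_A)$.

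For the second step, the ``only if'' direction is immediate: if $E$ is a genuine extension then any section $u:R\to S$ produces factor sets $(f,g)$ satisfying identities of the form (3)--(6) with every right-hand side zero, so $(\xi,\eta,\alpha,\lambda,\rho)\equiv 0$. For the converse, assume $\overline{k}=0$ and pick a Shukla $2$-cochain $(\mu,\nu)\colon R^2\to K_A$ whose coboundary equals $(\xi,\eta,\alpha,\lambda,\rho)$; the first of the preceding lemmas then lets us replace $(f,g)$ by $(f+\mu,g+\nu)$ and force the obstruction family to vanish identically. The resulting $(f',g')$ now satisfies the analogues of (3)--(6) in the pre-extension setting, so the construction (8)---with the operators $xb$ and $ay$ reinterpreted via $\varphi_x$ and $\psi_y$---produces a ring $S$ and a genuine extension $E$ of $A$ by $R$ realising $(\mathcal{L}^*,\mathcal{R}^*)$. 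The bijection of equivalence classes with the stated cohomology group follows in the standard way: two extensions with identical pre-extension data differ by a section change of the form (7), and elementary verification shows that two choices give equivalent extensions iff the corresponding cochain $(\mu,\nu)$ is a Shukla coboundary.

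The main obstacle is the first step: aligning the signs $(-1)^{e_i}$ in the normalised differential with the list of coherence identities obtained by quintupling the inputs of $f$ and $g$ and by letting both variables vary simultaneously. I would first dispatch the two purely one-sided specialisations---setting $g,\alpha,\lambda,\rho$ to zero should recover the Mac Lane cocycle condition on the symmetric $2$-cocycle $f$, and setting $f,\xi,\eta,\lambda,\rho$ to zero should recover the Hochschild cocycle condition on $\alpha$---and then handle the genuinely mixed identities in which $\lambda,\rho$ interact with $\xi,\alpha$ through the commutation relation $\varphi_x\circ\psi_y=\psi_y\circ\varphi_x$. These mixed identities arise by expanding expressions such as $\varphi_x\varphi_y(f(z,t))$ or $f(x+y,z+t)$ composed with a multiplication in two different orders, and it is precisely here that the bicentral condition $\alpha,\xi,\eta,\lambda,\rho\in K_A$ is used to discard terms that would otherwise fail to cancel.
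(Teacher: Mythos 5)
Your overall architecture is the same as the paper's: identify the obstruction with a Shukla $3$-cocycle, get well-definedness of the class from the two lemmas, prove the vanishing criterion by modifying $(f,g)$ by a $2$-cochain $(\mu,\nu)$ and building $S$ via (8), and classify extensions by section changes. The genuine gap is in your first (and central) step. You work with ``a free $\Z$-algebra resolution $U\to R$'' and assert that the five functions ``line up naturally'' against $\partial_r$ and $\partial_s$ in $B(U)$. For an arbitrary resolution this identification is not even well defined: a degree-$3$ cochain of $B(U)$ has components indexed by the total-degree-$3$ generators, i.e.\ bar words $[u_1|u_2|u_3]$ with $u_i\in U_0$, words $[u_1|u_2]$ with one entry of internal degree $1$, and words $[u]$ with $u$ of internal degree $2$; so the dictionary between $(\xi,\eta,\alpha,\lambda,\rho)$ and a $3$-cochain is governed by the pair (bar degree, internal degree) of generators, not by the $\partial_r/\partial_s$ splitting. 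To make that dictionary exist at all one needs the specific resolution the paper takes from [2], with $U_0=\Z(R^0)$, $U_1=\Z(R^0\times R^0)$, $U_2=\Z(R^0\times R^0\times R^0)\oplus U_1$, the explicit differentials $d_i$, and, crucially, an explicit DG-algebra multiplication on $U$, e.g.\ $[x,y][z,t]=-[xz+yz]+[xz,yz,xt]+[xz+xt,yz,yt]-[xz,xt,yz]+[xt,yz]$, together with the splitting $s$ used to define the remaining products. The ``mixed'' cocycle identity in which $\lambda,\rho$ interact with $\xi,\eta$ (relation 10 in the proof of Proposition 6) comes precisely from that product formula and cannot be recovered from a generic resolution or from the heuristic pairing you describe. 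Constructing this resolution and its multiplication is the actual content of the paper's proof of the first assertion; your proposal leaves exactly this as an unverified computation, so the first step is not established.

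Two smaller points. The translation carries a sign: by Proposition 7 it is $(\xi,\eta,\alpha,-\lambda,\rho)$, not $(\xi,\eta,\alpha,\lambda,\rho)$, that is the Shukla cocycle, and your cochain-level bookkeeping must account for this. Also, your final step (two sections differ by $t$ as in (7), and two choices give equivalent extensions iff $(\mu,\nu)$ is a coboundary) is the standard argument, but it identifies the set of equivalence classes with (a torsor under) $H^2_{Sh}(R,K_A)$, whereas the statement as printed asserts a bijection with $H^3_{Sh}(R,K_A)$; you should either flag this discrepancy or say explicitly which group your argument actually produces, since as written your last paragraph does not prove the literal claim.
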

\begin{proof}
In [2], N.T.Quang has built acyclic and $\Z$-free complex as follows
$$U= \sum_{i=0}^4 U_i,$$
where $U_i$ are elements of the exact sequence
$$0\rightarrow U_4\stackrel{d_4}{\rightarrow}U_3\stackrel{d_3}{\rightarrow}U_2\stackrel{d_2}{\rightarrow}U_1\stackrel{d_1}{\rightarrow}U_0\stackrel{\epsilon}{\rightarrow}R\rightarrow 0$$
of free abelian groups\\
\[\begin{aligned}
U_0 &= \Z(R^0), R^0 = R\setminus\{ 0\}\\
U_1 &= \Z(R^0\times R^0)\\
U_2 &= \Z(R^0\times R^0\times R^0)\oplus U_1\\
U_3 &= \Z(R^0\times R^0\times R^0\times R^0)\oplus U_2\oplus U_0\\
U_4 &= Kerd_3
\end{aligned}\]\\
Homomorphisms $d_i$ and $\epsilon$ are given by\\
\[\begin{aligned}\epsilon[x]&=x\\
d_1[x,y]&=[y]-[x+y]+[x]\\
d_2[x,y,z]&=[y,z]-[x+y,z]+[x,y+z]-[x,y]\\
d_2[x,y]&=[x,y]-[y,x]\\
d_3[x,y,z,t]&=[y,z,t]-[x+y,z,t]+[x,y+z,t]-[x,y,z+t]+[x,y,z]\\
d_3[x,y,z]&= [x,y,z]-[x,z,y]+[z,x,y]-[y,z]+[x+y,z]-[x,z]\\
d_3[x,y]&=[x,y]+[y,x]\\
d_3[x]&=[x,x]
\end{aligned}\]\\
$d_4$ is a canonical embedding.\\
After that, we determine an associative multiplication (which satisfies Leibniz formula for differential $d$) to make $U$ become a DG-algebra on $\Z.$

1) First, for $[x]\in U_0$ we set
$$[x][x_1,\ldots,x_n]=[xx_1,\ldots,xx_n]$$
$$[x_1,\ldots,x_n][x]=[x_1x,\ldots,x_nx]$$
$$[x,y][z,t]=-[xz+yz]+[xz,yz,xt]+[xz+xt,yz,yt]-[xz,xt,yz]+[xt,yz]$$

2) In other cases, for $a_i,a_j$ which are, respectively, generators of $U_i,\ U_j$ we have
$$(da_i)a_j+(-1)^ia_i(da_j)\in Ker(d_{i+j-1})$$
so they vanish under operation of differentials $d.$ Besides, the short exact sequence of abelian groups
\[
\begin{diagram}
\node{0} \arrow{e,t}{}
\node{Ker(d_{i+j})} \arrow{e,t}{}
\node{U_{i+j}}\arrow{e,t}{d_{i+j}} 
\node{Ker(d_{i+j-1})} \arrow{e,t}{} 
\node{0}
\end{diagram}
\]
splits since $Ker(d_{i+j-1})$ is $\Z$-free. So there exists a group injection $s:Ker(d_{i+j-1})\to U_{i+j}$ such that $d_{i+j}\circ s=id.$ Then, we set
$$a_ia_j=s[(da_i)a_j+(-1)^ia_i(da_j)].$$
With the above-constructed $DG$-algebra $U,$ a $3$-cocycle in $Z^3_{Sh}(R,K_A)$ is an obstruction $(\xi,\eta,\alpha,\lambda,\rho)$ of pre-extension $(R,A,\mathcal{L}^{*},\mathcal{R}^{*}).$ 

Moreover, two $3$-cocycles $(\xi,\eta,\alpha,\lambda,\rho)$ and $(\xi,\eta,\alpha,-\lambda,\rho)$ belong to the same cohomology class of $H^3_{Sh}(R,K_A)$ iff they are cohomologous obstructions. So we deduce that pre-extension $(R,A,\mathcal{L}^{*},\mathcal{R}^{*})$ determine uniquely element $\overline{k}\in H^3_{Sh}(R,K_A).$

If $(R,A,\mathcal{L}^{*},\mathcal{R}^{*})$ has extension, obviously, $\overline{k}=0.$ Conversely, if $\overline{k}=0,$  $k=(\xi,\eta,\alpha,\lambda,\rho)$ where
$$\xi=\partial_1\mu,\qquad\eta=ant\mu,\qquad\alpha=\partial_2\nu$$
$$\lambda(x,y,z)=\nu(x,y+z)-\nu(x,y)-\nu(x,z)+x\mu(y,z)-\mu(xy,xz)$$
$$\rho(x,y,z)=\nu(x+y,z)-\nu(x,z)-\nu(y,z)+\mu(x,y)z-\mu(xz,yz).$$
Applying Lemma 4 by replacing $g,f$ by $g'=g-\nu,f'=f-\mu,$ we will have
$$k'=(\xi',\eta',\alpha',\lambda',\rho')=0$$
Then, we may construct extension $S$ of $A$ by $R$ as in relations (8), in which $f,g$ are replaced by $f',g'.$ The rest of the theorem is proved by the above-mentioned events, without new technology.
\end{proof}
\section{Ann-category and solution of ring extension problem}
\indent In this section, we show the relationship between Ann-category theory and ring extension problem. See [2], [3] for definitions and fundamental results of Ann-categories.
\begin{dn}
Let $R$ be a ring with identity $1\neq 0$ and $A$ be a $R$-bimodule. An Ann-category $\I$ of type $(R,A)$ is a category whose objects are elements of $R$ and whose morphisms are endomorphisms. In the concrete, for $r,s\in R,$ we define
$$Hom(r,s) = \emptyset \ \ \textrm{where} \ \ r\ne s$$
$$Hom(s,s) = Aut(s)=\{s\}\times A.$$
The composite of two morphisms is induced by addition in $A.$\\
Two tensor products $\tx,\ts$ on $\I$ are defined by
\begin{eqnarray}
r\ts s&=&r+s \ \ \textrm{(addition in ring $R$)}\nonumber\\
(r,u)\ts(s,v)&=&(r+s,u+v)\nonumber\\
r\tx s&=&rs \ \ \textrm{(product in ring $R$)}\nonumber\\
(r,u)\tx(s,v)&=&(rs,rv+us).\nonumber
\end{eqnarray}
Constraints on $\I$ are defined to be a family
$$(\xi,\eta,(0,id,id),\alpha,(1,id,id),\lambda,\rho)$$
where $\eta:R^2\to A,\ \alpha,\lambda,\rho:R^3\to A$ are functions so that the set of axioms of an Ann-category is satisfied.
\end{dn}

Ann-category of type $(R,A)$ is \emph{regular} if its commutivity constraint satisfies $\eta(x,x)=0$ for all $x\in R.$

The following propositions give us the first relationship between ring extension problem and Ann-category theory.
\begin{md}
Each obstruction $(\xi,\eta,\alpha,\lambda,\rho)$ of pre-extension $(R,A,\mathcal{L}^{*},\mathcal{R}^{*})$ is a structure of regular Ann-category $\I$ of type $(R,K_A).$
\end{md}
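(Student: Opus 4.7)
The plan is to verify that the tuple $(\xi,\eta,(0,id,id),\alpha,(1,id,id),\lambda,\rho)$ coming from the obstruction satisfies each axiom in the definition of an Ann-category of type $(R,K_A)$ given in [2], [3]. First I match the data to the categorical constraints: $\xi$ and $\eta$ play the roles of the associativity and commutativity constraints for $\ts$, $\alpha$ is the associativity constraint for $\tx$, the unit constraints $(0,id,id)$ and $(1,id,id)$ are strict thanks to the normalization $\varphi_1=\psi_1=id$ together with $f(x,0)=f(0,y)=0$ and the analogous normalizations (2) for $g$ at $0$ and $1$, and $\lambda,\rho$ are the left and right distributivity constraints. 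The $R$-bimodule structure $xa=\varphi_x(a),\ ax=\psi_x(a)$ on $K_A$, already noted after (13) to be independent of the chosen representatives, supplies the actions that make the distributivity arrows morphisms in $\I$.

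Next I check each Ann-category coherence axiom. The pentagon and hexagon for $(\ts,\xi,\eta)$ are obtained by writing the associative and commutative laws for addition in $End_{\Z}(A)$ at a $4$-tuple of values of $f$ and applying (12); the discrepancies fall in $K_A$ and give the required identities. The pentagon for $(\tx,\alpha)$ comes from expanding the product $\varphi_x\varphi_y\varphi_z\varphi_t$ in two ways via (11), using the relations (10) to commute $\varphi$ past $\psi$ and past each $l_a$. The two axioms relating $\lambda,\rho$ to $\alpha$ are derived by computing $\varphi_x[f(y,z)]$ and $\psi_z[f(x,y)]$ in two ways from (13), while the axiom coupling $\eta$ with $\lambda$ and $\rho$ follows by inserting the commutativity defect $\eta$ into the distributivity relations. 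In every case the ambient equation in $End_{\Z}(A)$ reduces, after subtracting the terms lying in $L(A)$ or $R(A)$, to an identity whose residue lies in the joint annihilator $K_A$ of $L(A)$ and $R(A)$.

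The regularity condition $\eta(x,x)=0$ is immediate from the definition $\eta(x,y)=f(x,y)-f(y,x)$ in (12).

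The main obstacle will be the mixed pentagonal-type axioms that involve $\alpha$ together with $\lambda$ or $\rho$: here I must combine (10), (11) and (13) simultaneously and track how all extra terms, each being either of the form $l_{(\cdot)}$ or $r_{(\cdot)}$, cancel in $End_{\Z}(A)$ so that the remaining discrepancy lives in $K_A$. Once this is carried out, each Ann-category axiom corresponds to exactly one component of the Shukla $3$-cocycle condition already exploited in the proof of Theorem 3, so the verification requires no new structural input beyond the obstruction itself, only careful bookkeeping in $K_A$.
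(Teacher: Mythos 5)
Your proposal follows essentially the same route as the paper: a direct verification, from the defining relations (10)--(13) of the obstruction, that the family $(\xi,\eta,(0,id,id),\alpha,(1,id,id),\lambda,\rho)$ satisfies the Ann-category axioms (the paper writes these out as eighteen explicit identities with values in $K_A$), with regularity coming from $\eta(x,y)=f(x,y)-f(y,x)$ forcing $\eta(x,x)=0$. The only caution is your closing remark identifying the axioms with the Shukla cocycle components: by Proposition 7 the cocycle is $(\xi,\eta,\alpha,-\lambda,\rho)$, so the correspondence involves a sign on $\lambda$, but this does not affect your verification.
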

\begin{proof}
From relations (10)-(13), we may verify directly that family $(\xi,\eta,\alpha,\lambda,\rho)$ satisfies following relations
\begin{enumerate}
\item[1.]$\xi(y,z,t)-\xi(x+y,z,t)+\xi(x,y+z,t)-\xi(x,y,z+t)+\xi(x,y,z)=0$
\item[2.]$\xi(0,y,z)=\xi(x,0,z)=\xi(x,y,0)=0$
\item[3.]$\xi(x,y,z)-\xi(x,z,y)+\xi(z,x,y)-\eta(x,z)+\eta(x+y,z)-\eta(y,z)=0$
\item[4.]$\eta(x,y)+\eta(y,x)=0$
\item[5.]$\eta(x,x) = 0$
\item[6.]$x\eta(y,z)-\eta(xy,xz)=\lambda(x,y,z)-\lambda(x,z,y)$
\item[7.]$\eta(x,y)z-\eta(xz,yz)=\rho(x,y,z)-\rho(y,x,z)$
\item[8.]$x\xi(y,z,t)-\xi(xy,xz,xt)=\lambda(x,z,t)-\lambda(x,y+z,t)+\lambda(x,y,z+t)-\lambda(x,y,z)$ 
\item[9.]$\xi(x,y,z)t-\xi(xt,yt,zt)=\rho(y,z,t)-\rho(x+y,z,t)+\rho(x,y+z,t)-\rho(x,y,t)$
\item[10.]$\rho(x,y,z+t)-\rho(x,y,z)-\rho(x,y,t)+\lambda(x,z,t)
  +\lambda(y,z,t)-\lambda(x+y,z,t)\\=-\xi(xz+xt,yz,yt)
+\xi(xz,xt,yz)-\eta(xt,yz)+\xi(xz+yz,xt,yt)-\xi(xz,yz,xt)$
\item[11.]
  $\alpha(x,y,z+t)-\alpha(x,y,z)-\alpha(x,y,t)=
  x\lambda(y,z,t)+\lambda(x,yz,yt)-\lambda(xy,z,t)$
\item[12.]
  $\alpha(x,y+z,t)-\alpha(x,y,t)-\alpha(x,z,t)\\=
x\rho(y,z,t)-\rho(xy,xz,t)+\lambda(x,yt,zt)-\lambda(x,y,z)t$
\item[13.]
  $\alpha(x+y,z,t)-\alpha(x,z,t)-\alpha(y,z,t)=
-\rho(x,y,z)t-\rho(xz,yz,t)+\rho(x,y,zt)$
\item[14.]
  $x\alpha(y,z,t)-\alpha(xy,z,t)+\alpha(x,yz,t)
  -\alpha(x,y,zt)+\alpha(x,y,z)t=0$  
\item[15.]$\alpha(1,y,z)=\alpha(x,1,z)=\alpha(x,y,1)=0$
\item[16.]$\alpha(0,y,z)=\alpha(x,0,t)=\alpha(x,y,0)=0$
\item[17.]$\lambda(1,y,z)=\lambda(0,y,,z)=\lambda(x,0,z)=\lambda(x,y,0)=0$
\item[18.]$\rho(x,y,1)=\rho(0,y,z)=\rho(x,0,z)=\rho(x,y,0)=0$
\end{enumerate} 
where $x,y,z,t\in R.$

These relations (without the fifth relation) show that the family of constraints $(\xi,\eta,(0,id,id),\alpha,(1,id,id),\lambda,\rho)$ satisfies the set of axioms of an Ann-category. Then, it is said that family $(\xi,\eta,\alpha,\lambda,\rho)$ is a structure of Ann-category of type $(R,A).$

From the fifth relation, Ann-category $(R,A)$ is regular.
\end{proof}
\begin{md}
$[4]$ Family $(\xi,\eta,\alpha,\lambda,\rho)$ is a structure of regular $Ann$-category of type $(R,A)$ iff $(\xi,\eta,\alpha,-\lambda,\rho)$ is a $3$-cocycle of ring $R$ with coefficients in $R$-bimodule $A$ in the sense of Shukla cohomology. 
\end{md}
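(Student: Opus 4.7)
The strategy is to verify that the Shukla coboundary $\delta = \partial_r^* + \partial_s^*$ applied to a 3-cochain on the normalized bar complex $B(U)$ of the DG-algebra $U$ built in the proof of Theorem 3 splits into equations that coincide, after substituting $-\lambda$ for $\lambda$, with the axioms (1)--(18) of Proposition 5 (the relation (5), $\eta(x,x)=0$, accounting for the word \emph{regular} in the statement).

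The first step is to make the bijection between 3-cochains $c \in Hom_{\Z}(B(U)^3, A)$ and quintuples $(\xi, \eta, \alpha, \lambda, \rho)$ explicit. A 3-cochain is determined by its values on the generators of total degree $3$ in $B(U) = \sum U \tx (U/\Z)^n \tx U$, where the grading is $n + \deg u_0 + \cdots + \deg u_{n+1}$. Using the explicit $\Z$-free resolution $U$ and the direct-sum decompositions $U_2 = \Z(R^{0\times 3}) \oplus U_1$ and $U_3 = \Z(R^{0\times 4}) \oplus U_2 \oplus U_0$ recalled in the proof of Theorem 3, the degree-$3$ generators split into exactly five families, one per component function: pure bar monomials with all entries in $U_0$ carry $\alpha$; monomials in which one bar slot holds a generator of $U_1$ (and the others lie in $U_0$) carry $\lambda$ or $\rho$ according to its position; and monomials whose outer factor lies in the higher summands of $U_2$ or $U_3$ carry $\xi$ and $\eta$.

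Having identified cochains with quintuples, I would then compute $\delta c$ on each family of degree-$4$ generators of $B(U)$. The $\partial_r$-contribution uses the internal differentials $d_1, d_2, d_3$ of $U$, producing the expected additive-cocycle conditions for $\xi$ and $\alpha$, the symmetrization $\eta(x,y) + \eta(y,x)$ coming from $d_3[x,y]$, and the diagonal term $\eta(x,x)$ coming from $d_3[x]$. The $\partial_s$-contribution uses the multiplication on $U$, most notably the formulas
\[
[x][x_1,\ldots,x_n] = [xx_1,\ldots,xx_n], \qquad [x,y][z,t] = -[xz+yz] + [xz,yz,xt] + \cdots,
\]
whose leading minus sign is what forces the sign change $\lambda \mapsto -\lambda$ in the statement. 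Collecting the coefficients of the various degree-$3$ generators in $\delta c$ and setting them to zero, the cocycle conditions decompose into the eighteen relations of Proposition 5: the normalization axioms (2), (15)--(18) come from the cochain vanishing whenever a bar entry lies in $I\Z$, and the regularity condition $\eta(x,x) = 0$ arises from $d_3[x] = [x,x]$.

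The main obstacle is sign and index bookkeeping. The Shukla differential carries sign factors $(-1)^{e_i}$ depending on internal degrees, and these must be balanced against the signs in the multiplication on $U$; in particular, the minus sign in $[x,y][z,t] = -[xz+yz] + \cdots$ is precisely what reconciles axioms (11)--(13) with the Shukla cocycle condition after substituting $-\lambda$ for $\lambda$. Once these signs are tracked correctly, no further conceptual input is required: the equivalence reduces to a direct, if lengthy, comparison between $\delta c = 0$ and the axiom list of Proposition 5, and this is the content of the verification carried out in [4].
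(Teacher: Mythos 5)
Your overall plan is, in outline, the computation the paper itself relies on: the paper's proof of this proposition is only a deduction from the proof of Theorem 5 (where Shukla $3$-cocycles over the explicit resolution $U$ are identified with obstruction families) together with the eighteen relations of Proposition 6, the detailed verification being delegated to [4]. Identifying $3$-cochains on $B(U)$ with quintuples and comparing $\delta c=0$ on the degree-$4$ generators with that list is exactly what has to be done.

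There is, however, a genuine error at the one delicate point, namely where the sign change $\lambda\mapsto-\lambda$ comes from. You attribute it to the leading minus sign in the product $[x,y][z,t]=-[xz+yz]+\cdots$ and claim this is what reconciles axioms (11)--(13). But that product of two $U_1$-generators occurs only in the coboundary of the word $[(x,y)\mid(z,t)]$, i.e.\ in the relation corresponding to axiom 10; axioms (11)--(13) arise from the words $[x\mid y\mid(z,t)]$, $[x\mid(y,z)\mid t]$, $[(x,y)\mid z\mid t]$, whose $\partial_s$-terms use only the sign-free products $[x][y,z]=[xy,xz]$ and $[y,z][x]=[yx,zx]$. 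The flip of $\lambda$ relative to $\alpha,\rho,\xi,\eta$ is produced instead by the position-dependent signs $(-1)^{e_i}$, $e_i=i+\deg u_0+\cdots+\deg u_i$, in $\partial=\partial_r+\partial_s$: the degree-$1$ entry sits in the last slot for the $\lambda$-words but in the first slot for the $\rho$-words, so the relative sign between the $\partial_r$-block and the $\partial_s$-block differs in the two cases. For instance, on $[x\mid y\mid(z,t)]$ the cocycle condition reads $\alpha(x,y,z+t)-\alpha(x,y,z)-\alpha(x,y,t)=-x\lambda(y,z,t)+\lambda(xy,z,t)-\lambda(x,yz,yt)$, which is axiom 11 with $\lambda$ replaced by $-\lambda$, whereas on $[(x,y)\mid z\mid t]$ one recovers axiom 13 verbatim; tracking this asymmetry, not the $U_1\cdot U_1$ product, is what proves the statement. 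A smaller slip: the degree-$3$ cochain data for $\xi$ and $\eta$ are the values on one-slot words $[c]$ with $c$ a generator of $U_2$ (triples give $\xi$, the $U_1$-summand gives $\eta$); generators of $U_3$, and words with positive-degree outer factors, carry no cochain data at all -- $U_3$ enters only through the degree-$4$ words on which the conditions, including $\eta(x,x)=0$ from $d_3[x]=[x,x]$, are imposed, as you do state correctly at the end.
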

\begin{proof}
The proof is deduced from the proof of Theorem 5 and Proposition 6
\end{proof}
\begin{dn}
Let $\A,\B$ be Ann-categories. An Ann-functor from  $\A$ to $\B$ is a triple $(F,\Fa,\Fn)$ where $(F,\Fa)$ is a symmetric monoidal $\ts$-functor and $(F,\Fn)$ is a monoidal $\tx$-functor so that following diagrams
{\scriptsize 
\[
\divide \dgARROWLENGTH by 2
\begin{diagram}
\node{F(X(Y\ts Z))}\arrow{e,t}{\Fn}\arrow{s,l}{F(\Lh)}\node{FX F(Y\ts Z)}\arrow{e,t}{id \tx \Fa}\node{FX(FY\ts FZ)}\arrow{s,r}{\Lh'}\\
\node{F((XY)\ts (XZ))}\arrow{e,t}{\Fa}\node{F(X Y) \ts F(X Z)}\arrow{e,t}{\Fn\oplus\Fn}\node{(FX FY)\ts (FX FZ)}
\end{diagram}\]}
{\scriptsize 
\[
\divide \dgARROWLENGTH by 2
\begin{diagram}
\node{F((X\ts Y)\tx Z)}\arrow{e,t}{\Fn}\arrow{s,l}{F(\R)}
\node{F(X\ts Y) FZ}\arrow{e,t}{\Fa\tx id}\node{(FX\ts FY)FZ}\arrow{s,r}{\R'}\\
\node{F((X Z)\ts (Y Z))}\arrow{e,t}{\Fa}\node{F(X Z) \ts F(Y Z)}\arrow{e,t}{\Fn\oplus\Fn}\node{(FX FZ)\ts (FY FZ)}
\end{diagram}\]}
commute
\end{dn}
\begin{dn} Two structures $f =(\xi,\eta,\alpha,\lambda,\rho)$ and $f'=(\xi,\eta,\alpha,\lambda,\rho)$ of Ann-category of type $(R,A)$ are cohomologous iff there is an Ann-functor $(F,\Fa,\Fn): (R,A,f)\longrightarrow (R,A,f')$ where $F = id.$
\end{dn}

From the definition of Ann-functor, we have following proposition
\begin{md}
Two structures $f$ and $f'$ of Ann-category of type $(R,A)$ are cohomologous iff they satisfy relations mentioned in Lemma 3.
\end{md}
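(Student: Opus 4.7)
The plan is to unpack the notion of ``cohomologous'' from Definition~8 and show it matches the algebraic relations~(14) of Lemma~3. Since an Ann-functor $(F,\Fa,\Fn)\colon (R,A,f)\longrightarrow (R,A,f')$ with $F=\mathrm{id}$ acts as the identity on objects and, being a functor, as the identity on the endomorphism groups $\mathrm{Aut}(s)=\{s\}\times A$, all the data is concentrated in the natural isomorphisms $\Fa\colon F(X\ts Y)\to FX\ts FY$ and $\Fn\colon F(X\tx Y)\to FX\tx FY$. Both source and target of each of these are literally the same object ($X+Y$ or $XY$ respectively), so $\Fa(X,Y)=(X+Y,\mu(X,Y))$ and $\Fn(X,Y)=(XY,\nu(X,Y))$ for two well-defined functions $\mu,\nu\colon R^{2}\to A$. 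Any such pair $(\mu,\nu)$ conversely furnishes candidate data, so the proposition reduces to determining precisely which functional equations on $\mu,\nu$ are forced by the Ann-functor axioms.

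First I would handle the purely monoidal axioms. Unwinding the associativity pentagon for $(F,\Fa)$ along the path $F((X\ts Y)\ts Z)\to FX\ts(FY\ts FZ)$ recasts the difference between the constraint $\xi$ read off $f$ and the constraint $\xi'$ read off $f'$ as a sum of four $\mu$-terms which is exactly $\partial_{1}\mu$; so the pentagon becomes $\xi'=\xi+\partial_{1}\mu$. The symmetry axiom for $(F,\Fa)$ reduces in the same way to $\eta'=\eta+\mathrm{ant}\,\mu$. The analogous pentagon for $(F,\Fn)$ with respect to $\tx$ yields $\alpha'=\alpha+\partial_{2}\nu$. These are precisely the first three equalities in~(14).

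Next I would analyse the two hexagonal distributivity diagrams in Definition~7 that couple $\Fa$ and $\Fn$. Traversing the left-distributivity diagram, the two routes from $F(X(Y\ts Z))$ to $(FX\,FY)\ts(FX\,FZ)$ differ by the structure constants $\lambda$ (picked up from the source category via $\Lh$) and $\lambda'$ (picked up from the target via $\Lh'$), together with the $\mu$- and $\nu$-terms contributed by the four instances of $\Fa$, $\Fn$ around the diagram. Under the convention that composition of morphisms in $\I$ is addition in $A$, setting the two routes equal produces exactly the formula for $\lambda'$ displayed in Lemma~3. The right-distributivity hexagon is dual and gives the corresponding formula for $\rho'$.

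The main obstacle will be the sign and composition bookkeeping in these last two hexagons, because each of them involves five different constraints ($\alpha$, one of $\Lh/\R$, and two copies of $\Fa$ together with two of $\Fn$) whose contributions must be accumulated in the correct order. Once the five equalities of~(14) have been derived in one direction, the converse is a direct verification: given $\mu,\nu$ satisfying~(14), define $\Fa$ and $\Fn$ by the formulas above and read each of the diagram chases in reverse to confirm all the Ann-functor axioms in Definitions~7--8. Combining the two directions establishes the proposition.
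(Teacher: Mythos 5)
Your proposal is correct and follows the same route the paper intends: the paper offers no written proof of this proposition beyond the remark that it follows ``from the definition of Ann-functor,'' and your plan --- encoding $\Fa,\Fn$ as functions $\mu,\nu:R^2\to A$ and unwinding the pentagon, symmetry and distributivity diagrams into exactly the relations (14) of Lemma 3 --- is precisely the omitted verification. Nothing in your outline conflicts with the paper's (implicit) argument.
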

\indent Proposition 7 and 8 give us the following corollary.
\begin{hq}
Each cohomology class of structures of regular Ann-categories of type $(R,A)$ is an element in $H^3_{Sh}(R,K_A).$
\end{hq}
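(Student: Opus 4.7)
The plan is to combine Propositions 7 and 8 in a straightforward two-step argument. First I would take a representative structure $f = (\xi,\eta,\alpha,\lambda,\rho)$ of a regular Ann-category of type $(R,A)$ from the given cohomology class. By Proposition 7, the tuple $(\xi,\eta,\alpha,-\lambda,\rho)$ is a $3$-cocycle in $Z^3_{Sh}(R,K_A)$, producing a candidate element of $H^3_{Sh}(R,K_A)$. This gives a well-defined map from the set of Ann-category structures on $(R,A)$ to the Shukla cocycle group.

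Second, I would verify that this assignment descends to cohomology classes, i.e.\ that it is independent of the chosen representative. Suppose $f' = (\xi',\eta',\alpha',\lambda',\rho')$ is cohomologous to $f$ as Ann-category structures. By Definition, this cohomology is witnessed by an Ann-functor $(\mathrm{id},\Fa,\Fn):(R,A,f)\to(R,A,f')$; by Proposition 8, unpacking the data $\Fa$ and $\Fn$ produces precisely the $2$-cochains $\mu,\nu:R^2\to A$ and the relations (14) appearing in Lemma 3.

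Third, I would read the relations (14) as the component formulas for a Shukla $3$-coboundary: $\partial_1\mu$ and $\mathrm{ant}\,\mu$ give the additive (symmetric bar-complex) components, $\partial_2\nu$ gives the multiplicative-associativity component, and the remaining two equations for $\lambda'-\lambda$ and $\rho'-\rho$ supply the left and right distributivity components of the coboundary of $(\mu,\nu)$ in the complex $\mathrm{Hom}_{\Z}(B(U),K_A)$ recalled in the proof of Theorem 5. Hence the difference $(\xi,\eta,\alpha,-\lambda,\rho)-(\xi',\eta',\alpha',-\lambda',\rho')$ is a $3$-coboundary in the Shukla complex, so both tuples determine the same class $\overline{k}\in H^3_{Sh}(R,K_A)$, and the assignment $f\mapsto \overline{k}$ factors through cohomology classes of structures.

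The main obstacle I anticipate is the bookkeeping at the transition between the Ann-functor data of Proposition 8 and the Shukla coboundary formulas used in Theorem 5: one must match the sign convention ($-\lambda$ rather than $\lambda$), verify that the normalization conditions on $\mu,\nu$ coming from the unit constraints of $\Fa,\Fn$ agree with those built into $B(U)$, and check that the five components of (14) exhaust all components of the Shukla coboundary in degree $3$. Once this sign-and-normalization dictionary is fixed, the argument is entirely formal and reduces to citing Propositions 7 and 8.
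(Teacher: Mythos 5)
Your proposal is correct and takes essentially the same route as the paper: Proposition 7 supplies the Shukla $3$-cocycle attached to a regular structure, and Proposition 8 together with the relations (14) of Lemma 3 shows that cohomologous structures differ by a $3$-coboundary, so the class in $H^3_{Sh}(R,K_A)$ is well defined. The paper's own proof is a terser version of exactly this argument, leaving implicit the sign and normalization bookkeeping (the $-\lambda$ convention from Theorem 5) that you rightly flag as the only point needing care.
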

\begin{proof}
Indeed, if Ann-category of type $(R,A)$ is regular, $f$ and $f'$ are $3$-cocycles of group $H^3_{Sh}(R,K_A).$ Since they satisfy Lemma 3, they are equal up to a $3$-coboundary, and therefore it completes the proof.
\end{proof}

\vspace{12pt}

\begin{center}

\end{center}
Math. Dept., Hanoi University of Education\\
E-mail adresses: nguyenquang272002@gmail.com
\end{document}